\def\cA{{\mathcal A}}      
\def\cD{{\mathcal D}}   \def\cE{{\mathcal E}}
\def\cS{{\mathcal S}}      
\def\cV{{\mathcal V}}
\def\cal H{{\mathcal H}}
\def\R{\mathbb{R}}
\def\C{\mathbb{C}}
\def\N{\mathbb{N}}
\def\dom{{\text{\rm dom\,}}}
\def\phi{\varphi}
\def\dd{\textup{d}}
\renewcommand{\theta}{\vartheta}
\newtheorem{theorem}{Theorem}[section]
\newtheorem*{thm*}{Theorem}
\newtheorem{proposition}[theorem]{Proposition}
\theoremstyle{definition}
\newtheorem{definition}[theorem]{Definition}
\newtheorem{remark}[theorem]{Remark}
\newtheorem*{ack}{Acknowledgement}
\numberwithin{equation}{section}
\title{Quantum trees which maximize higher eigenvalues are unbalanced}
\author[J.~Rohleder]{Jonathan Rohleder}
\address{Matematiska institutionen\\ Stockholms universitet \\
106 91 Stockholm \\
Sweden}
\email{jonathan.rohleder@math.su.se}
\begin{document}

\begin{abstract}
The isoperimetric problem of maximizing all eigenvalues of the Laplacian on a metric tree graph within the class of trees of a given average edge length is studied. It turns out that, up to rescaling, the unique maximizer of the $k$-th positive eigenvalue is the star graph with three edges of lengths $2 k - 1$, $1$ and $1$. This complements the previously known result that the first nonzero eigenvalue is maximized by all equilateral star graphs and indicates that optimizers of isoperimetric problems for higher eigenvalues may be less balanced in their shape---an observation which is known from numerical results on the optimization of higher eigenvalues of Laplacians on Euclidean domains.
\end{abstract}

\maketitle

\section{Introduction}

Within spectral geometry, isoperimetric problems for eigenvalues have a long history that reaches back at least as far as to Lord Rayleigh's famous book {\it The Theory of Sound} \cite[§210]{Rayleigh}. This class of problems deals with finding a shape which maximizes or minimizes (functions of) eigenvalues of the Laplacian or other differential operators under a constraint on a geometric quantity such as the volume, perimeter or diameter of the underlying space. To review just one well-known example, consider the Laplacian with Neumann boundary conditions on a bounded domain $\Omega \subset \R^2$ of area $|\Omega|$ and its eigenvalues $0 = \mu_1 (\Omega) < \mu_2 (\Omega) \leq \mu_3 (\Omega) \leq \dots$. The unique domain $\Omega$ which maximizes the first positive eigenvalue $\mu_2 (\Omega)$ under the constraint $|\Omega| = 1$ is the disc with area one \cite{S54}, while the maximizer of $\mu_3 (\Omega)$ with $|\Omega| = 1$ is the union of two disjoint discs of area $1/2$ each \cite{GNP09}; cf.\ also \cite{BH19}. For higher eigenvalues it is conjectured that the domains maximizing $\mu_4 (\Omega), \mu_5 (\Omega), \dots$ are of less simple shape, cf.\ the numerical observations and pictures in \cite{AF12}. For instance, numerics indicates that the maximizer for $\mu_5 (\Omega)$ is the disjoint union of a ball and a larger, non-convex domain with certain symmetries. For a broad overview on shape optimization problems for eigenvalues of Euclidean domains we refer the reader to~\cite{H06}.

In the present paper we deal with the Laplacian $- \Delta_\Gamma$ on a metric graph $\Gamma$ with standard (continuity--Kirchhoff) vertex conditions, the natural counterpart for metric graphs of the Neumann Laplacian on a domain. We refer to Section~2 for its precise definition. We denote by 
\begin{align*}
 0 = \mu_1 (\Gamma) < \mu_2 (\Gamma) \leq \mu_3 (\Gamma) \leq \dots
\end{align*}
the eigenvalues of $- \Delta_\Gamma$ in nondecreasing order, counted according to their multiplicities. Isoperimetric problems for this operator have come into focus in recent years, where most results deal with $\mu_2 (\Gamma)$, the so-called spectral gap, and its optimizers within the class of graphs with fixed ``volume'' (i.e.\ total length), diameter, or average edge length. Amongst other results it is known by now that $\mu_2 (\Gamma)$ is minimized among all graphs of given total length $L$ by the interval (the graph with two vertices and one edge of length $L$ connecting the two) \cite{N87}, see also \cite{KN14}. If we restrict ourselves to the class of doubly connected graphs, the minimizers were identified to be so-called necklace graphs \cite{BL17}. As simple examples such as equilateral star graphs show, a maximizer of $\mu_2 (\Gamma)$ among graphs of fixed length cannot exist. Instead it turned out that a suitable parameter is the {\em average edge length} 
\begin{align*}
 \cA := \frac{L}{E},
\end{align*}
where $E$ is the number of edges of $\Gamma$ and, again, $L$ is the total length. It was shown in \cite{KKMM16} that the only maximizing graphs of $\mu_2 (\Gamma)$ for fixed $\cA$ are equilateral flower graphs and equilateral pumpkins; see \cite[Section 3]{KKMM16} for their definitions and further details. If one restricts the considered class of graphs to trees, i.e.\ graphs without cycles, then the unique maximizers of $\mu_2 (\Gamma)$ are all equilateral star graphs \cite{R17}. For further related results we refer the reader to \cite{BL17,BKKM17,BKKM19,EJ12,KKT16,K20,KN19,KR20,MP20,P20,RS20}.

While the minimizing result extends to higher eigenvalues, where $\mu_{k + 1} (\Gamma)$ for fixed total length is minimized by the equilateral star with $k + 1$ edges, see \cite{F05}, to the best of our knowledge no results are available yet about which graphs maximize $\mu_{k + 1} (\Gamma)$ for $k \geq 2$ when $\cA$ is fixed. It is the aim of this paper to characterize these maximizers within the class of tree graphs---a class of graphs which seems to share particularly many spectral properties with Euclidean domains. It turns out that the maximizers suffer a certain lack of balance. More precisely, they are non-equilateral and their edge lengths get the more unbalanced the higher $k$ gets. Our main result is the following theorem. 

\begin{theorem}\label{thm:intro}
Let $k \geq 2$. Among all finite, connected metric trees with $E \geq 3$ edges and fixed average length $\cA$, $\mu_{k + 1} (\Gamma)$ is maximal if and only if $\Gamma$ is a star graph with 3 edges of lengths 
\begin{align*}
 \frac{2 k - 1}{2 k + 1} L, \quad \frac{1}{2 k + 1} L, \quad \frac{1}{2 k + 1} L,
\end{align*}
where $L$ denotes the total length of $\Gamma$.
\end{theorem}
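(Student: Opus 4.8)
The plan is to reduce the problem to an explicit optimization over star graphs and then rule out all other trees. The first ingredient is an upper bound on $\mu_{k+1}(\Gamma)$ for an arbitrary tree $\Gamma$ in terms of its total length $L$ and the constraint $\cA = L/E$. A natural tool is surgery: cutting the graph at a vertex, or gluing vertices together, has monotonicity properties for the eigenvalues under standard conditions (gluing vertices together does not decrease $\mu_{k+1}$, while removing an edge or decreasing its length has a controlled effect). Since $E \geq 3$ means $L = \cA E \geq 3\cA$, and since identifying vertices can only push the eigenvalues up, one expects that among trees with a prescribed $L$ and $E \geq 3$ the extremal configuration collapses to a star; the constraint $\cA$ fixed is equivalent to $L$ being as large as possible relative to $E$, i.e.\ $E$ as small as possible, which forces $E = 3$. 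So the first main step is: \emph{reduce to star graphs with exactly three edges}.

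\textbf{Reduction to the three-star.} I would argue that for any tree $\Gamma$ with $E$ edges and total length $L$, one has $\mu_{k+1}(\Gamma) \le \mu_{k+1}(S)$ where $S$ is a suitable star with three edges and total length $L' \ge L$ with $L'/3 \le \cA$, hence (by scaling, since $\mu_{k+1}$ scales like $1/L^2$ at fixed shape) one can compare everything at a common value of $\cA$. Concretely: contracting all internal edges of $\Gamma$ to a single central vertex yields a star with $E$ edges whose lengths are the distances from the center to the leaves; this operation only increases $\mu_{k+1}$. Then among stars, merging pairs of the shortest edges (again a vertex identification, non-decreasing for the spectrum) together with a length bookkeeping argument reduces the number of edges down to three without decreasing $\mu_{k+1}$ or increasing $\cA$. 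The equality case in each surgery step must be tracked to get the ``if and only if'' — this is where most of the care goes, since the paper claims a \emph{unique} maximizer.

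\textbf{Optimizing over three-stars.} Once restricted to a star with edge lengths $\ell_1 \ge \ell_2 \ge \ell_3 > 0$ and $\ell_1 + \ell_2 + \ell_3 = L$ fixed, the eigenvalue problem is fully explicit: eigenfunctions are piecewise combinations of $\cos(\sqrt{\mu}\,x)$ on each edge with a Neumann condition at each leaf and continuity--Kirchhoff at the center, so $\mu$ is a solution of a secular equation, essentially $\sum_{j} \tan(\sqrt{\mu}\,\ell_j) = 0$ (with the appropriate normalization of $x$ on each edge so the leaf is at $x=\ell_j$ and the center at $x=0$, giving $\cos'$ vanishing at the leaf). Writing $t = \sqrt{\mu}$, I would analyze the function $F(t;\ell) = \sum_{j=1}^3 \tan(t \ell_j)$ and locate its $(k+1)$-st positive root (the root giving $\mu_{k+1}$, counting the zero eigenvalue and multiplicities — the double eigenvalues coming from antisymmetric eigenfunctions supported on two edges of equal length need separate bookkeeping). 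The claim is that this root is maximized exactly when $\ell_2 = \ell_3 = L/(2k+1)$ and $\ell_1 = (2k-1)L/(2k+1)$. I would prove this by a perturbation/Lagrange-multiplier argument: at an interior critical point of the constrained problem, $\partial_{\ell_j} \mu$ must be equal for all $j$ with $\ell_j$ free; computing $\partial_{\ell_j}\mu$ via implicit differentiation of the secular equation and Hadamard-type formulas shows this forces either two of the lengths to be equal or one of them to sit at a boundary of its admissible range, and a direct comparison of the finitely many resulting candidate configurations (using monotonicity of $\tan$ and the precise indexing of roots) singles out the asserted one. Checking that at $\ell_2=\ell_3=L/(2k+1)$, $\ell_1=(2k-1)L/(2k+1)$ the $(k+1)$-st eigenvalue equals $\big(\tfrac{\pi(2k+1)}{2L}\big)^2 \cdot$ (something) and verifying it beats every competitor is the computational heart.

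\textbf{Expected main obstacle.} The hardest part will be the bookkeeping of eigenvalue \emph{indices} through the surgery steps combined with nailing down uniqueness. Vertex identification is non-decreasing for each $\mu_{k+1}$, but the number of eigenvalues below a given value can jump, so one must control exactly how the counting function behaves — particularly because equilateral sub-configurations produce high-multiplicity eigenvalues (this is precisely why the equilateral star is \emph{not} optimal for $k \ge 2$, in contrast to $\mu_2$). Getting the strict inequality for every non-optimal tree, rather than just $\le$, requires showing that each surgery step is strictly increasing unless $\Gamma$ already has the claimed form, which means analyzing the equality cases of the relevant interlacing/monotonicity lemmas carefully. I would expect the paper to isolate this in one or two technical lemmas (stated earlier in the text, which I may invoke) about how $\mu_{k+1}$ changes under edge contraction and under transplanting length between edges of a star.
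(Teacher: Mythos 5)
Your overall strategy (reduce to $3$-stars by surgery with bookkeeping of $\cA$, then optimize over $3$-stars explicitly) points in the right direction, but both halves have genuine gaps. In the reduction step, the operations you propose do not give the inequality you need on the average edge length. Contracting all internal edges destroys their length, so the resulting star can easily have \emph{smaller} average edge length than $\Gamma$ (take long internal edges and short pendant edges), and then the bound in terms of $\cA$ goes the wrong way; likewise ``merging pairs of the shortest edges by vertex identification'' either creates cycles (identifying two leaves) or is not an operation with the monotonicity and edge-count bookkeeping you assert, and your stated length comparison ($L' \geq L$ with $L'/3 \leq \cA$) is internally inconsistent. What is actually needed, and what the paper does, is to keep the \emph{three longest} edges of $\Gamma$ inside a subtree $\cS$ that is a topological $3$-star (a union of two paths), remove --- not contract --- everything else using the pendant-edge-removal monotonicity, and observe that then $\cA(\cS) = L(\cS)/3 \geq \bigl(L(e_1)+L(e_2)+L(e_3)\bigr)/3 \geq \cA(\Gamma)$; the equality analysis of exactly these two inequalities is also what forces $\Gamma = \cS$ in the uniqueness part, which you correctly flag as delicate but do not resolve.

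In the $3$-star step, the secular-equation/Lagrange-multiplier plan runs into a structural obstacle that you do not address: at the claimed maximizer the relevant eigenvalue is \emph{double} ($\mu_{k+1} = \mu_{k+2} = (2k+1)^2\pi^2/(4L^2)$), so $\mu_{k+1}$ is not a differentiable function of the edge lengths there, the criticality condition ``$\partial_{\ell_j}\mu$ equal for all free $\ell_j$'' does not apply at the optimum, and the antisymmetric eigenfunctions vanishing at the central vertex are not captured by the naive secular function $\sum_j \tan(\sqrt{\mu}\,\ell_j)$ (which requires $\cos(\sqrt{\mu}\,\ell_j)\neq 0$). You also never verify that the candidate star attains the bound with the correct \emph{index}: it is not enough that $(2k+1)^2\pi^2/(4L^2)$ is an eigenvalue, one must show it is the $(k+1)$-st, which the paper does by comparing with a disconnected graph via a rank-one perturbation counting argument. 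The paper avoids all of these issues by proving the sharp bound $\mu_{k+1} \leq (2k+1)^2\pi^2/(4L^2)$ on $3$-stars by induction on $k$: one cuts a piece of length $\pi/\sqrt{\mu_{k+2}}$ off the longest edge, shows by an eigenvalue-counting argument that the eigenvalue survives with index at most $k+1$ on the shortened star, and tracks the equality case through the induction using the necessary condition (eigenfunction vanishing at the attachment vertex) for equality under pendant-edge removal. As written, your proposal does not establish the bound, the attainment, or the uniqueness; the missing inductive (or equivalent quantitative) argument on $3$-stars is the heart of the proof.
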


The maximizers of the first few eigenvalues are displayed in Figure \ref{fig:intro}. Compared with the known results on $\mu_2 (\Gamma)$ within the class of metric trees with given average length $\cA$, where any equilateral star is a maximizer, it is remarkable that for higher eigenvalues only 3-stars with the specified lengths do the job.
\begin{figure}[htb]
  \centering
  \begin{tikzpicture}
    \draw[fill] (0,0) circle(0.05);
    \draw[fill] (3,0) circle(0.05);
    \draw[fill] (3.87,-0.5) circle(0.05);
    \draw[fill] (3.87,0.5) circle(0.05);
    \draw (0,0)--(3,0);
    \draw (3,0)--(3.87,-0.5);
    \draw (3,0)--(3.87,0.5);
    \node[] at (1.5,-0.4) {$\Gamma_3$};
  \begin{scope}[shift={(5.5,0)}]
    \draw[fill] (0,0) circle(0.05);
    \draw[fill] (5,0) circle(0.05);
    \draw[fill] (5.87,-0.5) circle(0.05);
    \draw[fill] (5.87,0.5) circle(0.05);
    \draw (0,0)--(5,0);
    \draw (5,0)--(5.87,-0.5);
    \draw (5,0)--(5.87,0.5);
    \node[] at (2.5,-0.4) {$\Gamma_4$};
  \end{scope}
  \begin{scope}[shift={(2,-1.5)}]
    \draw[fill] (0,0) circle(0.05);
    \draw[fill] (7,0) circle(0.05);
    \draw[fill] (7.87,-0.5) circle(0.05);
    \draw[fill] (7.87,0.5) circle(0.05);
    \draw (0,0)--(7,0);
    \draw (7,0)--(7.87,-0.5);
    \draw (7,0)--(7.87,0.5);
    \node[] at (3.5,-0.4) {$\Gamma_5$};
  \end{scope}
  \end{tikzpicture}
  \caption{The maximizing trees $\Gamma_j$ of $\mu_j (\Gamma)$ for fixed $\cA$, $j = 3, 4, 5$.}
  \label{fig:intro}
\end{figure}
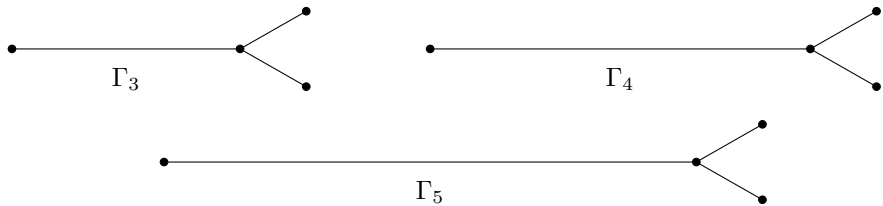

Our proof of Theorem \ref{thm:intro}, based on domain monotonicity properties of metric graph eigenvalues, actually yields an explicit, sharp upper bound for the quantity $\mu_{k + 1} (\Gamma) \cA^2$ depending only on $k$, 
\begin{align*}
 \mu_{k + 1} (\Gamma) \cA^2 \leq \frac{(2 k + 1)^2 \pi^2}{36},
\end{align*}
see Theorem \ref{thm:trees}. This bound may, however, also be obtained from the spectral estimates in \cite{BKKM17}, see Remark \ref{rem:BKKM} below for a more detailed discussion. Thus the present paper confirms the sharpness of the mentioned estimate in \cite{BKKM17} for trees.

\section{Metric graphs, the (standard) Laplacian and its eigenvalues}

A metric graph $\Gamma$ is a discrete graph on a vertex set $\cV$ with edge set $\cE$ that is equipped, additionally, with a length function $L : \cE \to (0, \infty)$. By parametrizing each edge $e$ along the interval $[0, L (e)]$ we may identify $e$ with that interval, and this parametrization induces a natural metric on $\Gamma$. We will always assume that $\Gamma$ is a finite graph, i.e.\ $V := V (\Gamma) := |\cV|$ and $E := E (\Gamma) := |\cE|$ are finite, and that $\Gamma$ is connected. We write $L := L (\Gamma) := \sum_{e \in \cE} L (e)$ for the total length of $\Gamma$. By the finiteness assumption and since we do not allow edges of infinite length, the metric space $\Gamma$ is always compact. We also assume that $\Gamma$ does not contain any loops (i.e.\ edges whose both endpoints correspond to the same vertex). Actually, we will mostly deal with the case that $\Gamma$ is a tree, i.e.\ a graph without cycles, anyway.

By a complex-valued function $f$ on a metric graph $\Gamma$ we mean a collection of functions $f_e : (0, L (e)) \to \C$, $e \in \cE$. In line with this, $f$ belongs to $L^2 (\Gamma)$ if and only if $f_e \in L^2 (0, L (e))$ holds for all $e \in \cE$. Moreover, the Sobolev spaces
\begin{align*}
 \widetilde H^k (\Gamma) := \left\{ f \in L^2 (\Gamma) : f_e \in H^k (\Gamma), e \in \cE \right\}
\end{align*}
for $k = 1, 2, \dots$ and 
\begin{align*}
 H^1 (\Gamma) := \left\{ f \in \widetilde H^1 (\Gamma) : f~\text{is continuous at each vertex} \right\}
\end{align*}
are natural spaces for the treatment of differential operators on metric graphs; in the latter definition, continuity at $v$ means that on all edges incident to the vertex~$v$, $f$ has the same boundary value (or trace) at the endpoint corresponding to $v$. These spaces have the usual properties; for instance they are compactly embedded into~$L^2 (\Gamma)$.

The present text focuses on the Laplacian (i.e.\ the second derivative operator on each edge) subject to standard (also called continuity-Kirchhoff) matching conditions at all vertices. For this, at any vertex $v$, for $f \in \widetilde H^2 (\Gamma)$ we define 
\begin{align*}
 \partial_\nu f (v) : = \sum \partial f_e (v),
\end{align*} 
where the sum is taken over all edges incident to $v$ and $\partial f_e (v)$ denotes the derivative of $f_e$ at the endpoint of $e$ corresponding to $v$, taken in the direction towards $v$. 

\begin{definition}
On any finite, connected metric graph $\Gamma$ the operator $- \Delta_\Gamma$ in $L^2 (\Gamma)$ defined as
\begin{align*}
 (- \Delta_\Gamma f)_e & = - f_e'', \quad e \in \cE, \\
 \dom (- \Delta_\Gamma) & = \left\{ f \in \widetilde H^2 (\Gamma) \cap H^1 (\Gamma) : \partial_\nu f (v) = 0~\text{for all}~v \in \cV \right\},
\end{align*}
is called {\em standard Laplacian} or just {\em Laplacian} on $\Gamma$.
\end{definition}

Note that at the ``loose ends'' (i.e.\ the vertices of degree one) the condition $\partial_\nu f (v) = 0$ simply is a Neumann boundary condition. It is well known that $- \Delta_\Gamma$ is a self-adjoint, non-negative operator. Its lowest eigenvalue is 0 with multiplicity one, with the corresponding eigenfunctions being constant. When ordering the eigenvalues non-decreasingly and counting them with their respective multiplicities (which may be larger than one) we have a sequence
\begin{align*}
 0 = \mu_1 (\Gamma) < \mu_2 (\Gamma) \leq \mu_3 (\Gamma) \leq \dots,
\end{align*}
and we just write $\mu_j$ instead of $\mu_j (\Gamma)$ if the context is clear. In full analogy to the Neumann Laplacian on an interval or a Euclidean domain, the eigenvalues of $-\Delta_\Gamma$ enjoy the variational characterization
\begin{align}\label{eq:minMax}
  \mu_k (\Gamma) = \min_{\substack{F \subset H^1 (\Gamma) \\ \dim F = k}} \max_{\substack{f \in F \\ f \neq 0}} \frac{\int_\Gamma |f'|^2 \dd x}{\int_\Gamma |f|^2 \dd x}, \quad k = 1, 2, \dots.
\end{align}
It is worth mentioning that vertices of degree two do not matter for all our considerations since they may always be added by splitting an edge $e$ into two edges $e', e''$ each of which is incident to the same (new) vertex of degree two and which satisfy $L (e') + L (e'') = L (e)$. This procedure does neither change the domain of $- \Delta_\Gamma$ nor its action, nor, in particular, its eigenvalues.

It has turned out in recent years that eigenvalue inequalities for quantum graphs may often be proven elegantly by using so-called {\em surgery principles}, i.e.\ by employing the (often but not always) monotonous behavior of eigenvalues with respect to surgery operations performed to the metric graph such as adding or removing edges, joining vertices or transplanting subgraphs; see, e.g.,~\cite{BKKM17,BKKM19,KKMM16,KMN13,R17,RS20}. For the proof of the main result of the present note we only need the following surgery principle. It has been well known for long that removing ``pendant'' edges from a graph has a non-decreasing effect on all eigenvalues; see, e.g.,~\cite[Theorem~2]{KMN13} or~\cite[Proposition~3.1]{R17}. However, for us the following necessary condition for equality will be crucial; therefore we provide a short proof.

\begin{proposition}\label{prop}
Let $\Gamma$ be a finite, connected metric graph and let $\Gamma'$ be the graph obtained from $\Gamma$ by removing a pendant edge $e_0$, i.e.\ an edge with a vertex of degree one as one of its endpoints. Then
\begin{align*}
 \mu_{k + 1} (\Gamma) \leq \mu_{k + 1} (\Gamma')
\end{align*} 
holds for $k = 1, 2, \dots$. If $\mu_{k + 1} (\Gamma) = \mu_{k + 1} (\Gamma')$ then there exists an eigenfunction of $- \Delta_{\Gamma'}$ corresponding to the eigenvalue $\mu_{k + 1} (\Gamma')$ which is zero at the vertex $v_0$ of $\Gamma'$ to which $e_0$ is incident in $\Gamma$.
\end{proposition}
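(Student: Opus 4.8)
The plan is to combine the min--max formula \eqref{eq:minMax} with a single explicit test space on $\Gamma$ built by extending functions on $\Gamma'$ across the pendant edge by constants; the resulting chain of inequalities gives the monotonicity, and tracking the cases of equality in that chain gives the statement about the vanishing eigenfunction.

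Concretely, I would fix $L^2(\Gamma')$-orthonormal eigenfunctions $\phi_1, \phi_2, \dots$ of $-\Delta_{\Gamma'}$ with $-\Delta_{\Gamma'} \phi_j = \mu_j(\Gamma') \phi_j$ and put $F' := \spann\{\phi_1, \dots, \phi_{k+1}\} \subset H^1(\Gamma')$. Given $g \in F'$, let $\tilde g$ be the function on $\Gamma$ that agrees with $g$ on $\Gamma'$ and equals the constant $g(v_0)$ on $e_0$. Since $g$ is continuous on $\Gamma'$ and there is no matching condition at the degree-one endpoint of $e_0$, one has $\tilde g \in H^1(\Gamma)$; the map $g \mapsto \tilde g$ is linear and injective, so $F := \{\tilde g : g \in F'\}$ is $(k+1)$-dimensional. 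With $\ell := L(e_0) > 0$, for any $g \in F' \setminus \{0\}$,
\begin{align*}
 \frac{\int_\Gamma |\tilde g'|^2\,\dd x}{\int_\Gamma |\tilde g|^2\,\dd x} = \frac{\int_{\Gamma'} |g'|^2\,\dd x}{\int_{\Gamma'} |g|^2\,\dd x + \ell\,|g(v_0)|^2} \leq \frac{\int_{\Gamma'} |g'|^2\,\dd x}{\int_{\Gamma'} |g|^2\,\dd x} \leq \mu_{k+1}(\Gamma'),
\end{align*}
the last inequality because $F'$ is spanned by eigenfunctions with eigenvalues at most $\mu_{k+1}(\Gamma')$. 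Inserting $F$ into \eqref{eq:minMax} yields $\mu_{k+1}(\Gamma) \leq \mu_{k+1}(\Gamma')$.

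For the equality case, suppose $\mu := \mu_{k+1}(\Gamma) = \mu_{k+1}(\Gamma')$. Then the maximum of the Rayleigh quotient over $F \setminus \{0\}$ equals $\mu$, and I would pick a maximizer $\tilde g_*$ with $g_* \in F' \setminus \{0\}$. The displayed chain then forces $\int_{\Gamma'}|g_*'|^2\,\dd x = \mu \int_{\Gamma'}|g_*|^2\,\dd x$; since the Rayleigh quotient on $F'$ equals $\mu_{k+1}(\Gamma')$ precisely on the span of those $\phi_j$ with $j \le k+1$ whose eigenvalue is $\mu$, this shows $g_*$ is an eigenfunction of $-\Delta_{\Gamma'}$ for $\mu$. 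The chain also forces $\int_{\Gamma'}|g_*|^2\,\dd x + \ell|g_*(v_0)|^2 = \int_{\Gamma'}|g_*|^2\,\dd x$ after cancelling the common numerator $\int_{\Gamma'}|g_*'|^2\,\dd x = \mu \int_{\Gamma'}|g_*|^2\,\dd x$, which is strictly positive because $g_* \neq 0$ and $\mu = \mu_{k+1}(\Gamma') > 0$ (as $\Gamma'$ is connected and $k \geq 1$); hence $g_*(v_0) = 0$, and $g_*$ is the eigenfunction claimed.

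The one step needing genuine care is this equality analysis: one must argue, via the spectral decomposition of $g_*$ in the eigenbasis, that the maximizer lands exactly in the $\mu$-eigenspace rather than merely in $F'$, and one must check the strict positivity of $\int_{\Gamma'}|g_*'|^2\,\dd x$ before cancelling it. The remaining points---that $\tilde g$ is a legitimate $H^1(\Gamma)$ function, that the extension map is injective, and that $\Gamma'$ stays connected after deleting a pendant edge---are routine.
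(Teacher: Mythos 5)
Your proposal is correct and follows essentially the same route as the paper: the min--max principle \eqref{eq:minMax} applied to the constant extension of the span of the first $k+1$ eigenfunctions of $-\Delta_{\Gamma'}$, with the same Rayleigh-quotient computation on the extended functions. The only (cosmetic) difference is that you treat the equality case directly by analyzing a maximizer in the finite-dimensional test space via the spectral decomposition, whereas the paper argues the contrapositive (if no $\mu$-eigenfunction vanishes at $v_0$, the inequality is strict); both hinge on the same orthogonality facts and the strict positivity of the added term $L(e_0)\,|f(v_0)|^2$.
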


\begin{proof}
We interpret $\Gamma'$ as a subset of $\Gamma$. Let $k \in \N$, $\mu := \mu_{k + 1} (\Gamma')$, and let $f_1, \dots, f_{k + 1}$ be pairwise orthogonal eigenfunctions of $- \Delta_{\Gamma'}$ corresponding to the eigenvalues $\mu_1 (\Gamma'), \dots, \mu_{k + 1} (\Gamma')$. Moreover, let $F$ denote the linear span of these functions. An easy integration by parts, taking into account the continuity-Kirchhoff vertex conditions, shows that their derivatives $f_1', \dots, f_{k + 1}'$ are then pairwise orthogonal as well; note that the latter depend on the chosen direction of parametrization of each edge. Extending each function $f \in F$ constantly on $e_0$ in a way that $f$ is continuous on $\Gamma$, we obtain a $(k + 1)$-dimensional subspace $\widetilde F$ of $H^1 (\Gamma)$. If $f \in F \setminus \{0\}$ is orthogonal to $\ker (- \Delta_{\Gamma'} - \mu)$ then 
\begin{align*}
 \frac{\int_\Gamma |\widetilde f'|^2 \dd x}{\int_\Gamma |\widetilde f|^2 \dd x} \leq \frac{\int_{\Gamma'} |f'|^2 \dd x}{\int_{\Gamma'} |f|^2 \dd x} < \mu
\end{align*}
anyway. On the other hand, if each nontrivial $f \in \ker (-\Delta_{\Gamma'} - \mu)$ is nonzero at $v_0$ then for all such $f$
\begin{align*}
 \frac{\int_\Gamma |\widetilde f'|^2 \dd x}{\int_\Gamma |\widetilde f|^2 \dd x} = \frac{\int_{\Gamma'} |f'|^2 \dd x}{\int_{\Gamma'} |f|^2 \dd x + |f (v_0)|^2 L (e_0)} < \frac{\int_{\Gamma'} |f'|^2 \dd x}{\int_{\Gamma'} |f|^2 \dd x} = \mu.
\end{align*}
Hence, in this case, by the min-max principle \eqref{eq:minMax},
\begin{align*}
 \mu_{k + 1} (\Gamma) \leq \max_{\substack{\widetilde f \in \widetilde F \\ \widetilde f \neq 0}} \frac{\int_\Gamma |\widetilde f'|^2 \dd x}{\int_\Gamma |\widetilde f|^2 \dd x} < \mu = \mu_{k + 1} (\Gamma'),
\end{align*}
which proves the proposition.
\end{proof}

We would like to emphasize that the necessary condition for equality given in the previous proposition is not sufficient. In fact, if one adds a sufficiently long (compared to the total length of $\Gamma$) pendant edge to a given metric graph $\Gamma$ then the $k$-th positive eigenvalue will always decrease strictly.

\section{An isoperimetric inequality for higher eigenvalues of the Laplacian}

In this section we state and proof the main result of this article. In fact, the following theorem yields, in particular, the statement of Theorem \ref{thm:intro} in the introduction. Recall that
\begin{align*}
 \cA = \cA (\Gamma) = \frac{L (\Gamma)}{E (\Gamma)}
\end{align*}
denotes the average edge length of $\Gamma$ and that we are assuming throughout that our trees do not contain vertices of degree two; in particular, $\Gamma$ is not a path graph.

\begin{theorem}\label{thm:trees}
Let $\Gamma$ be a finite, connected tree with $E \geq 3$ edges. Then 
\begin{align}\label{eq:bound}
 \mu_{k + 1} \cA^2 \leq \frac{(2 k + 1)^2 \pi^2}{36}
\end{align}
holds for all $k = 1, 2, \dots$. The bound \eqref{eq:bound} is sharp; more precisely, the following assertions hold.
\begin{enumerate}
 \item For $k = 1$, equality holds if and only if $\Gamma$ is any equilateral star graph.
 \item For each $k \geq 2$, equality holds if and only if $\Gamma$ is a 3-star with edge lengths $\frac{2 k - 1}{2 k + 1} L, \frac{1}{2 k + 1} L$, and $\frac{1}{2 k + 1} L$.
\end{enumerate}
\end{theorem}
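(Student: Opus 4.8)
The plan is to reduce the problem to a single edge carrying the relevant oscillation and then optimize a one‑dimensional quantity. First I would normalize and fix $k \geq 1$. Let $\Gamma$ be a tree with $E \geq 3$ edges realizing (or approaching) the supremum of $\mu_{k+1} \cA^2$. Since $\cA = L/E$ is fixed once $L$ and $E$ are, and since removing a pendant edge can only increase $\mu_{k+1}$ (Proposition \ref{prop}) while it decreases $E$ by one and decreases $L$ by the removed length, I would argue that the extremal configuration must be "as small as possible" in the number of edges: a tree with $E \geq 3$ edges always has at least two pendant edges, so I would repeatedly prune pendant edges, tracking the effect on $\mu_{k+1}\cA^2$, until reaching $E = 3$, i.e.\ a $3$‑star (the smallest tree with no degree‑two vertices and $E \geq 3$). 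The subtle point is that pruning changes both $\mu_{k+1}$ (weakly up) and $\cA$ (in a direction that depends on whether the removed edge is longer or shorter than the current average), so the monotonicity of the product $\mu_{k+1}\cA^2$ is not automatic; I would handle this by first showing that among all trees with a \emph{fixed} number of edges $E$ the optimum is attained by a star (joining all interior vertices into one can be shown to be non‑decreasing for eigenvalues by a standard surgery/transplantation argument, cf.\ the references cited), and then separately that a star with more than $3$ edges is beaten by a suitable $3$‑star. Combining these, the optimizer among all trees with $E \geq 3$ is a $3$‑star.

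For a $3$‑star with edge lengths $\ell_1 \geq \ell_2 \geq \ell_3$ and total length $L = \ell_1 + \ell_2 + \ell_3$, we have $\cA = L/3$, so the claim becomes $\mu_{k+1} \leq \frac{(2k+1)^2\pi^2}{4 L^2}$ with equality iff $(\ell_1,\ell_2,\ell_3) = \frac{L}{2k+1}(2k-1,1,1)$. To prove this I would invoke Proposition \ref{prop} again: if $\mu_{k+1}$ of the $3$‑star equals $\mu_{k+1}$ of the path obtained by deleting the shortest pendant edge $\ell_3$, there is an eigenfunction of the $2$‑edge path (an interval of length $\ell_1+\ell_2$) at eigenvalue $\mu_{k+1}$ vanishing at the branch point. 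But on an interval of length $\ell_1+\ell_2$ with Neumann ends, the $(k+1)$‑st eigenvalue is $\big(\tfrac{k\pi}{\ell_1+\ell_2}\big)^2$ and its eigenfunctions vanish only at specific points; feeding this constraint back in yields at best $\mu_{k+1} \leq \big(\tfrac{k\pi}{\ell_1+\ell_2}\big)^2$. This is already a handle, but the cleanest route is probably more direct: for a $3$‑star an eigenfunction at eigenvalue $\lambda = \sigma^2$ is, on each edge (parametrized from the loose end), $c_i \cos(\sigma x)$ by the Neumann condition at the leaf, and the continuity plus Kirchhoff conditions at the center give the secular equation
\begin{align}\label{eq:secular}
 \sum_{i=1}^{3} \tan(\sigma \ell_i) = 0,
\end{align}
valid when no $\cos(\sigma \ell_i)$ vanishes (the degenerate cases are handled separately and correspond to an eigenfunction supported on two edges). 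One then has to show that the $(k+1)$‑st positive root $\sigma$ of the system of conditions defining the spectrum satisfies $\sigma L \leq \frac{(2k+1)\pi}{2}$ with equality exactly for the stated lengths.

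To carry out that last, analytic step I would proceed as follows. Order $\ell_1 \geq \ell_2 \geq \ell_3$ and set $\sigma$ to be the eigenvalue parameter for $\mu_{k+1}$. Counting eigenvalues via the argument principle / Sturm oscillation on the star, $\mu_{k+1}$ corresponds to eigenfunctions with a prescribed total number of sign changes; on the long edge $e_1$ one shows the eigenfunction must perform roughly $k$ half‑oscillations, forcing $\sigma \ell_1$ to be close to $k\pi$, and the best case (largest $\sigma$) occurs when the two short edges are as short as possible while still "using up" the remaining required oscillation, which by \eqref{eq:secular} and a monotonicity analysis in the $\ell_i$ pins down $\ell_2 = \ell_3$ (by symmetry/concavity of $\tan$) and then a single‑variable optimization in $t := \ell_2/L$ gives the maximum of $\sigma L$ at $t = \tfrac{1}{2k+1}$, with value $\tfrac{(2k+1)\pi}{2}$, whence $\mu_{k+1}\cA^2 = \sigma^2 (L/3)^2 \leq \tfrac{(2k+1)^2\pi^2}{36}$. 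The case $k=1$ is special because then the optimizing single‑variable problem is degenerate: $\sigma L = \tfrac{3\pi}{2}$ is achieved whenever $\sigma\ell_i = \tfrac{\pi}{2}$ for all $i$, i.e.\ on \emph{every} equilateral star (with any number of edges), which is exactly assertion (i); for $k \geq 2$ the extra oscillation breaks this degeneracy and forces the unique unbalanced shape of assertion (ii).

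\textbf{Main obstacle.} The delicate part is the eigenvalue \emph{counting} for the $3$‑star: identifying precisely which root of the secular equation \eqref{eq:secular} (together with the degenerate branches $\cos(\sigma\ell_i)=0$) is $\mu_{k+1}$, and then proving the sharp monotonicity of that root as a function of $(\ell_1,\ell_2,\ell_3)$ on the constraint surface $\sum \ell_i = L$. Establishing the rigidity (the "only if") requires tracking equality throughout the pruning/symmetrization chain — in particular that any optimal tree with $E > 3$ or with $\ell_2 \neq \ell_3$ is \emph{strictly} suboptimal — for which the equality case of Proposition \ref{prop} (existence of an eigenfunction vanishing at the attachment vertex) is exactly the tool needed, but it must be combined carefully with the explicit form of the $3$‑star eigenfunctions to exclude all non‑generic configurations.
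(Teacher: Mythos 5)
The heart of the theorem is the sharp bound for 3-stars together with its rigidity, and this is exactly the part your proposal leaves unproven. You reduce it to showing that the $(k+1)$-st positive root $\sigma$ of the secular problem on a 3-star satisfies $\sigma L \le (2k+1)\pi/2$, with equality only for the lengths $\frac{2k-1}{2k+1}L, \frac{1}{2k+1}L, \frac{1}{2k+1}L$, but the announced ``eigenvalue counting plus monotonicity analysis'' is never carried out, and you yourself flag it as the main obstacle. It is not a routine step: the conjectured maximizer lies exactly on the degenerate branch, since there $\sigma\ell_2=\sigma\ell_3=\pi/2$, so the $\tan$-form secular equation you propose to analyze is invalid at the extremal configuration and the relevant eigenvalue is double (the paper makes this very point in a footnote, explaining why a generic oscillation-count argument does not identify the maximizers). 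Moreover, your intermediate bound $\mu_{k+1}\le k^2\pi^2/(\ell_1+\ell_2)^2$, obtained by deleting the shortest edge, implies \eqref{eq:bound} only when $\ell_3\le L/(2k+1)$; for larger $\ell_3$ a complementary estimate is needed. The paper settles the base case $k=2$ by combining the path comparison with a comparison to the equilateral 3-star, and handles general $k$ by induction: cut a segment of length $\pi/r$, $r^2=\mu_{k+2}$, off the loose end of the longest edge, show by a rank-one-perturbation count that $r^2$ is then at worst the $(k+1)$-st eigenvalue of the shortened 3-star, and apply the induction hypothesis, with equality propagating through the same recursion. Something of this substance is missing from your argument, as is the verification (the paper's Step 7) that the stated 3-star actually attains equality, which again uses an explicit eigenfunction and an eigenvalue count rather than just the secular equation.

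The reduction from general trees to 3-stars also needs repair. Your route --- glue all interior vertices to get an $E$-star, then assert that an $E$-star is ``beaten by a suitable 3-star'' --- leaves the second step unproved and makes the rigidity analysis harder, since equality would then have to be traced through the gluing surgery as well. The difficulty you correctly identify (pruning moves $\cA$ in an uncontrolled direction) disappears if you instead prune down to a 3-star subgraph $\cS$ containing the three longest edges of $\Gamma$ (built from two paths, as in the paper): then $\mu_{k+1}(\Gamma)\le\mu_{k+1}(\cS)$ by Proposition \ref{prop} and $\cA(\cS)=L(\cS)/3\ge L(\Gamma)/E(\Gamma)=\cA(\Gamma)$ because the three longest edges have average length at least the global average, so $\mu_{k+1}(\Gamma)\cA(\Gamma)^2\le\mu_{k+1}(\cS)\cA(\cS)^2$ at once, and tracking equality forces $\Gamma=\cS$. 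As written, your proposal is a plausible programme, but the quantitative core (the sharp 3-star bound and its equality case) and the equality bookkeeping are genuine gaps, not details.
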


\begin{remark}\label{rem:BKKM}
We emphasize once more that the bound \eqref{eq:bound} itself can also be derived from \cite[Theorem 4.9]{BKKM17}, which, for the case of trees and standard vertex conditions, reads 
\begin{align}\label{eq:sharp}
 \mu_{k + 1} \leq \left( k - 1 + \frac{|\partial \Gamma|}{2} \right)^2 \frac{\pi^2}{L^2},
\end{align}
where $|\partial \Gamma|$ denotes the number of vertices of degree one. Especially for each 3-star this estimate coincides with the one in Theorem \ref{thm:trees} and, thus, we show that the estimate \eqref{eq:sharp} is sharp for trees. Sharpness of its counterpart for graphs with cycles was earlier established in \cite{KS18}. However, our main interest here is in the class of optimizers of \eqref{eq:bound}, and the following proof shows the bound \eqref{eq:bound} and characterizes all maximizing trees at the same time. 
\end{remark}

\begin{remark}
The bound \eqref{eq:bound} holds also if we admit vertices of degree two, but no optimizing trees may have such vertices. In fact, removing a vertex of degree two (by joining the two incident edges into one edge) does not change the eigenvalues of $- \Delta_\Gamma$, but it strictly increases the average edge length $\cA$. Due to this fact, also the assumption $E \geq 3$ in the theorem is not very restrictive; the only trees which are excluded by this are intervals. However, for the eigenvalues of the Laplacian with standard (Neumann) vertex conditions on an interval we have, by explicit calculation, $\mu_{k + 1} \cA^2 = k^2 \pi^2$, which, by the above theorem, is strictly larger than the value of $\mu_{k + 1} \cA^2$ on any nontrivial metric tree.
\end{remark}

\begin{proof}[Proof of Theorem \ref{thm:trees}]
For $k = 1$ both the estimate and the characterization of maximizers can be found in \cite[Theorem 3.2]{R17}. In the following we show the theorem for $k \geq 2$ in seven steps.

{\bf Step 1:} the estimate \eqref{eq:bound} is true if $\Gamma$ is a 3-star and $k = 2$. That is, on any 3-star $\Gamma$ we have
\begin{align}\label{eq:bound33}
 \mu_3 (\Gamma) \leq \frac{25}{4} \frac{\pi^2}{L (\Gamma)^2}.
\end{align}
To prove this, assume that the edges $e_1, e_2, e_3$ of $\Gamma$ are ordered such that $L (e_1) \geq L (e_2) \geq L (e_3)$. Denote by $\cS$ the equilateral star graph obtained from $\Gamma$ by shortening $e_1$ and $e_2$ to length $L (e_3)$. Then by Proposition \ref{prop}, 
\begin{align*}
 \mu_3 (\Gamma) \leq \mu_3 (\cS) = \frac{9 \pi^2}{4 L (\cS)^2}.
\end{align*}
If we set $\alpha := L (\cS) / L (\Gamma) = 3 L (e_3) / L (\Gamma) \leq 1$, it follows
\begin{align}\label{eq:alpha1}
 \mu_3 (\Gamma) \leq \frac{9 \pi^2}{4 \alpha^2 L (\Gamma)^2}.
\end{align}
On the other hand, if $\Pi$ denotes the path graph formed by $e_1$ and $e_2$ then again
\begin{align}\label{eq:alpha2}
 \mu_3 (\Gamma) \leq \mu_3 (\Pi) = \frac{4 \pi^2}{(L (e_1) + L (e_2))^2} = \frac{4 \pi^2}{(1 - \frac{\alpha}{3})^2 L (\Gamma)^2}.
\end{align}
Now, if $0 < \alpha \leq \frac{3}{5}$ then \eqref{eq:alpha2} yields the bound \eqref{eq:bound33}. On the other hand, the same bound follows from \eqref{eq:alpha1} if $\frac{3}{5} \leq \alpha \leq 1$.

{\bf Step 2:} among all 3-stars, equality in \eqref{eq:bound33} implies that $\Gamma$ has edge lengths $\frac{3}{5} L, \frac{1}{5} L$, and $\frac{1}{5} L$. Indeed, Step 1 of this proof shows that if~$\Gamma$ is a maximizer then $\alpha = 3/5$, i.e., the shortest edge $e_3$ satisfies $L (e_3) = L/ 5$, and at the same time, all inequality signs in the above estimates are equalities. But equality in \eqref{eq:alpha2} implies, by Proposition \ref{prop}, that $e_3$ is attached to the path graph $\Pi$ at a zero of the eigenfunction of $- \Delta_\Pi$ corresponding to $\mu_3 (\Pi)$. Since these zeroes appear at the two symmetric points with distance $L (\Pi) / 4$ to the boundary of $\Pi$, it follows that $L (e_1) = 3 (L (e_1) + L (e_2))/4$. Together with $L (e_3) = L / 5$ this yields that each maximizer $\Gamma$ has the claimed edge lengths. We will see in Step 7 below that 3-stars with the specified edge lengths indeed satisfy the desired equality.

{\bf Step 3:} if $\Gamma$ is any 3-star then the estimate \eqref{eq:bound} holds for all $k$. That is, on any 3-star $\Gamma$ we have
\begin{align}\label{eq:bound3k}
  \mu_{k + 1} (\Gamma) \leq \frac{(2 k + 1)^2}{4} \frac{\pi^2}{L (\Gamma)^2}
\end{align}
for $k = 2, 3, \dots$. We show \eqref{eq:bound3k} by induction over $k$.  For $k = 2$ it was already established in Step 1. Suppose that \eqref{eq:bound3k} holds for some fixed $k$ and each 3-star. Let $\Gamma$ be a 3-star with its edges $e_1, e_2, e_3$ ordered nonincreasingly according to their lengths. Our aim is to show that
\begin{align}\label{eq:inductionBound}
 \mu_{k + 2} (\Gamma) \leq \frac{(2 k + 3)^2}{4} \frac{\pi^2}{L (\Gamma)^2}.
\end{align}
First of all, since $k + 2 \geq 4 = E + 1$, a comparison with the direct sum of the decoupled Neumann Laplacians on the separate edges of $\Gamma$ yields 
\begin{align*}
 \mu_{k + 2} (\Gamma) \geq \frac{\pi^2}{L (e_1)^2}.
\end{align*}
Hence $r := \sqrt{\mu_{k + 2} (\Gamma)}$ satisfies $L (e_1) \geq \pi / r$. Therefore we may consider the graph $\Gamma'$ obtained from $\Gamma$ by removing a piece of length $\pi / r$ from the ``loose end'' of the edge $e_1$. 
If we denote by $\psi_{k + 2}$ an eigenfunction of $- \Delta_\Gamma$ corresponding to $r^2$ then its restriction to $\Gamma'$ will be an eigenfunction of $- \Delta_{\Gamma'}$; in particular, $r^2$ is an eigenvalue on $\Gamma'$ with the same multiplicity as on $\Gamma$,
\begin{align*}
 m := \dim \ker \big(- \Delta_{\Gamma'} - r^2 \big) = \dim \ker \big(- \Delta_\Gamma - r^2 \big).
\end{align*}
Our next aim is to show that
\begin{align}\label{eq:greatDeal}
 r^2 = \mu_j (\Gamma') \quad \text{for some}~j \leq k + 1.
\end{align}
Assume the converse, i.e., $\mu_{k + 1} (\Gamma') < r^2$. If we denote by $I$ the interval of length $\pi/r$ then the disconnected graph consisting of $\Gamma'$ and $I$ as its two connected components has at least $k + 1 + m + 2 = k + m + 3$ eigenvalues in $[0, r^2]$, where we have used that $r^2$ is the second Neumann eigenvalue of $I$. On the other hand, the Laplacian on the disconnected graph is a rank-one perturbation of $- \Delta_\Gamma$ and the latter operator has at most $k + 1 + m$ eigenvalues in $[0, r^2]$, a contradiction. We have proved~\eqref{eq:greatDeal}.\footnote{A slightly more intuitive argument to prove \eqref{eq:greatDeal} goes as follows: generically, the eigenvalue $\mu_{k + 2}$ is simple and its corresponding eigenfunction is a nonzero multiple of $\cos (r x)$ on $e_1$ (assuming $e_1$ is parametrized towards the star vertex) and has exactly $k + 1$ zeroes in $\Gamma$. Cutting away a piece of length $\pi / r$ then leads to an eigenfunction on $\Gamma'$ with exactly $k$ zeroes and, hence, it has to correspond to $\mu_{k + 1} (\Gamma')$. However, the latter argument is less suitable for identifying the maximizers in the next step, since the eigenfunctions of the latter do not satisfy the generic property.}

We may now distinguish two cases. If $L (e_1) = \pi / r$ then $\Gamma'$ is a path graph and
\begin{align*}
 \mu_{k + 2} (\Gamma) = r^2 \leq \mu_{k + 1} (\Gamma') = \frac{k^2 \pi^2}{L (\Gamma')^2} = \frac{k^2 \pi^2}{(L (\Gamma) - \pi/r)^2} < \frac{(2 k + 1)^2}{4} \frac{\pi^2}{(L (\Gamma) - \pi/r)^2}.
\end{align*}
Otherwise, $\Gamma'$ is still a 3-star and, by the induction hypothesis,
\begin{align}\label{eq:bigBoss}
 \mu_{k + 2} (\Gamma) = r^2 \leq \mu_{k + 1} (\Gamma') \leq \frac{(2 k + 1)^2}{4} \frac{\pi^2}{(L (\Gamma) - \pi/r)^2}
\end{align}
as well. Employing this we obtain
\begin{align*}
 r L (\Gamma) - \pi = r (L (\Gamma) - \pi/r) \leq \frac{(2 k + 1) \pi}{2} 
\end{align*}
and thus
\begin{align*}
 r \leq \frac{(2 k + 3) \pi}{2 L (\Gamma)},
\end{align*}
which is \eqref{eq:inductionBound}. 

{\bf Step 4:} among all 3-stars, equality in \eqref{eq:bound3k} implies that the edges have lengths as stated in the theorem. We show this by induction over $k$ again. The case $k = 2$ was treated in Step 2. Suppose that $k \geq 2$ is fixed and that equality holds in \eqref{eq:bound3k} only for the above-stated choice of edge lengths. Assume further that $\Gamma$ is a 3-star for which equality holds in~\eqref{eq:inductionBound}. Then in the reasoning of Step~3 we are in the case that $L (e_1) > \pi / r$ and we must have equality in~\eqref{eq:bigBoss}. But this implies equality in~\eqref{eq:bound3k} with $\Gamma$ replaced by $\Gamma'$, the 3-star obtained from $\Gamma$ by removing a piece of length $\pi/r$ from the loose end of $e_1$. In other words, the 3-star $\Gamma'$ maximizes $\mu_{k + 1} \cA^2$, and from the induction assumption we obtain that $\Gamma'$ has edge lengths $L' (e_1) = \frac{2 k - 1}{2 k + 1} L (\Gamma'), L' (e_2) = L' (e_3) = \frac{1}{2 k + 1} L (\Gamma')$. By construction, the edge lengths of $\Gamma$ are then given by
\begin{align*}
 L (e_1) & = L' (e_1) + \pi/r = \frac{2 k - 1}{2 k + 1} \left( L (\Gamma) - \frac{2 L (\Gamma)}{2 k + 3} \right) + \frac{2 L (\Gamma)}{2 k + 3} = \frac{2 k + 1}{2 k + 3} L (\Gamma)
\end{align*}
and, for $j = 2, 3$,
\begin{align*}
 L (e_j) = L' (e_j) = \frac{1}{2 k + 1} \left( L (\Gamma) - \frac{2 L (\Gamma)}{2 k + 3} \right) = \frac{1}{2 k + 3} L (\Gamma).
\end{align*}
Hence among 3-stars any maximizers need to have the lengths specified in the theorem.

{\bf Step 5:} the estimate \eqref{eq:bound} holds on arbitrary trees. For this let now $\Gamma$ be an arbitrary finite, connected tree with $E \geq 3$. Let $e_1, e_2, e_3$ be three edges such that $L (e_1) \geq L (e_2) \geq L (e_3) \geq L (e)$ for all $e \in \cE$, $e \neq e_1, e_2, e_3$. Within $\Gamma$ choose any maximal path $\Pi_1$ which contains $e_1$ and $e_2$ and connects two vertices of degree one. Let $\hat e_3$ be such that $\hat e_3$ is not contained in $\Pi_1$ but has maximal length in $\Gamma \setminus \Pi_1$, i.e., $L (\hat e_3) \geq e$ for all edges $e$ not belonging to $\Pi_1$; if $e_3$ is not part of $\Pi_1$ then we choose $\hat e_3 = e_3$. Furthermore, let $\Pi_2$ denote any path which contains $\hat e_3$ and connects a vertex of degree one with a vertex on $\Pi_1$ without having any joint edge with $\Pi_1$. Then $\cS := \Pi_1 \cup \Pi_2$ is a connected subgraph of $\Gamma$ and it may, after cutting off all further edges of $\Gamma$ and then removing all vertices of degree two, be viewed as a $3$-star. Moreover, by construction the longest edges $e_1, e_2, e_3$ of $\Gamma$ are contained in $\cS$ and hence
\begin{align}\label{eq:avLengths}
 \cA (\cS) = \frac{L (\cS)}{3} \geq \frac{L (e_1) + L (e_2) + L (e_3)}{3} \geq \frac{L (\Gamma)}{E (\Gamma)}.
\end{align}
Consequently, by the result of Step 3 and Proposition \ref{prop},
\begin{align}\label{eq:fetzt}
 \mu_{k + 1} (\Gamma) \leq \mu_{k + 1} (\cS) \leq \frac{(2 k + 1)^2}{36} \frac{E (\cS)^2 \pi^2}{L (\cS)^2} \leq \frac{(2 k + 1)^2}{36} \frac{E (\Gamma)^2 \pi^2}{L (\Gamma)^2},
\end{align}
which proves \eqref{eq:bound}.

{\bf Step 6:} equality in \eqref{eq:bound} implies that $\Gamma$ is a 3-star and has the edge lengths specified in the theorem. To this end, let us assume that $\Gamma$ is a tree for which equality holds in \eqref{eq:bound} for some $k \geq 2$. It suffices to show that $\Gamma$ is a 3-star; after that the lengths property follows from Step 4. First of all, from the equality in \eqref{eq:bound} we get, in particular, equalities everywhere in \eqref{eq:avLengths} and \eqref{eq:fetzt}. The first (in)equality in~\eqref{eq:avLengths} then implies $L (e_1) + L (e_2) + L (e_3) = L (S)$, so that $S$ consists only of $e_1, e_2$ and $e_3$; by the construction of $S$ this also yields that $e_1, e_2$ and $e_3$ all are incident to vertices of degree one in $\Gamma$. Hence $\cS$ is the 3-star with $e_1, e_2, e_3$ as its edges, and from the equalities in \eqref{eq:fetzt} we get, furthermore, that $\cS$ is a maximizer itself and, by Step 4, has to have edge lengths
\begin{align}\label{eq:yeah}
 L (e_1) = \frac{2 k - 1}{2 k + 1} L (\cS), \quad L (e_2) = L (e_3) = \frac{1}{2 k + 1} L (\cS).
\end{align}
It remains to show that $\Gamma = \cS$. In fact, any edge $e$ different from $e_1, e_2, e_3$ necessarily would have to satisfy $L (e) \leq \frac{1}{2 k + 1} L (\cS)$, and since $L (e_1)$ is larger, this would yield $L (\Gamma)/E (\Gamma) < L (\cS) / E (\cS)$, in contradiction to the equality in \eqref{eq:avLengths}. Therefore $\Gamma = \cS$ and \eqref{eq:yeah} is the desired statement on the edge lengths.

{\bf Step 7:} each 3-star with edge lengths $\frac{2 k - 1}{2 k + 1} L$, $\frac{1}{2 k + 1} L$, $\frac{1}{2 k + 1} L$ satisfies 
\begin{align}\label{eq:dasGrosseGanze}
 \mu_{k + 1} = \mu_{k + 2} = \frac{(2 k + 1)^2 \pi^2}{4 L^2}
\end{align}
and, thus, yields equality in \eqref{eq:bound}. 

Firstly, note that the expression in \eqref{eq:dasGrosseGanze} is indeed an eigenvalue of multiplicity two since the function $\cos ( (2 k + 1) \pi x /(2 L))$ along the path consisting of the long and one of the short edges and complemented by zero on the other short edge is an eigenfunction and we may interchange the roles of the two short (and equally long) edges.

Secondly, by the estimate \eqref{eq:bound} proven in Step 1 and 3 we have
\begin{align}\label{eq:erfolg}
 \mu_k \leq \frac{(2 k - 1)^2 \pi^2}{4 L^2} < \frac{(2 k + 1)^2 \pi^2}{4 L^2},
\end{align}
and, on the other hand, the disconnected graph $\cD$ consisting of a path formed by $e_1$ and $e_2$ as one connected component and the separated edge $e_3$ as its other connected component satsifies, by an easy computation,
\begin{align*}
 \mu_{k + 2} (\cD) = \frac{(2 k + 1)^2 \pi^2}{4 L^2} < \frac{(k + 1)^2 (2 k + 1)^2 \pi^2}{4 k^2 L^2} = \mu_{k + 3} (\cD) \leq \mu_{k + 3} (\Gamma),
\end{align*}
the latter inequality being valid as $- \Delta_\cD$ is a rank-one perturbation of $- \Delta_\Gamma$. Together with \eqref{eq:erfolg} this yields \eqref{eq:dasGrosseGanze} as its only possible conclusion and completes this proof.
\end{proof}

\begin{remark}
In Step 7 of the proof of the previous theorem we have seen more than the theorem claims. Indeed, for the 3-stars which maximize $\mu_{k + 1}$ we always have $\mu_{k + 1} = \mu_{k + 2}$. This is in line with other results on isoperimetric inequalities, where for the optimally shaped domains and graphs the eigenvalues in question are often multiple.
\end{remark}

\begin{ack}
The author is grateful to the Swedish Research Council (VR) for supporting this research financially through grant no.\ 2018-04560.
\end{ack}

\end{document}